\newtheorem{theo}{Theorem}
\newtheorem*{theo*}{Theorem}
\newtheorem{lemm}[theo]{Lemma}
\theoremstyle{remark}
\newtheorem{rema}[theo]{Remark}
\newcommand{\ind}{\mathbf{1}}
\newcommand{\area}{\operatorname{Area}}
\renewcommand{\d}{\mathrm{d}}
\newcommand{\N}{\mathbb{N}}
\newcommand{\Z}{\mathbb{Z}}
\newcommand{\R}{\mathbb{R}}
\newcommand{\SL}{\mathrm{SL}}
\newcommand{\GL}{\mathrm{GL}}
\newcommand{\PSL}{\mathrm{PSL}}
\newcommand{\slr}{{\SL(2,\R)}}
\newcommand{\pslr}{{\PSL(2,\R)}}
\newcommand{\CP}{\mathbb{CP}^1}
\newcommand{\C}{\mathcal{C}}
\newcommand{\Q}{\mathcal{Q}}
\renewcommand{\H}{\mathcal{H}}
\newcommand{\hyp}{\mathcal{H}^{hyp}}
\newcommand{\M}{\mathcal{M}}
\renewcommand{\P}{\mathcal{P}}
\begin{document}
\title[A non-varying phenomenon]{A non-varying phenomenon with an application to the wind-tree model}

\author{Angel Pardo}
\address{%
  Institut Fourier\\%
  Universit\'e Grenoble Alpes\\%
  38058 Grenoble cedex 09\\%
  France%
}

\begin{abstract}
We exhibit a non-varying phenomenon for the counting problem of cylinders, weighted by their area, passing through two marked (regular) Weierstrass points of a translation surface in a hyperelliptic connected component $\mathcal{H}^{hyp}(2g-2)$ or $\mathcal{H}^{hyp}(g-1,g-1)$, $g > 1$. As an application, we obtain the non-varying phenomenon for the counting problem of (weighted) periodic trajectories on the Ehrenfest wind-tree model, a billiard in the plane endowed with $\mathbb{Z}^2$-periodically located identical rectangular obstacles.
\end{abstract}

\maketitle
\section{Introduction}

In this work, we study a refinement of the (area) Siegel--Veech constant, which governs the asymptotics on the counting of closed geodesics on flat surfaces.
One of the motivations is the study of periodic trajectories in the Eherenfest wind-tree model, a statistical physics model, given by playing billiards in the plane with a $\Z^2$-periodic array of identical rectangular obstacles, as in Figure~\ref{figu:WTM1} (see~\S\ref{sect:application} for more details).

\begin{figure}[ht]
\centering
\includegraphics[width=.75\textwidth,height=.2\textheight,keepaspectratio]{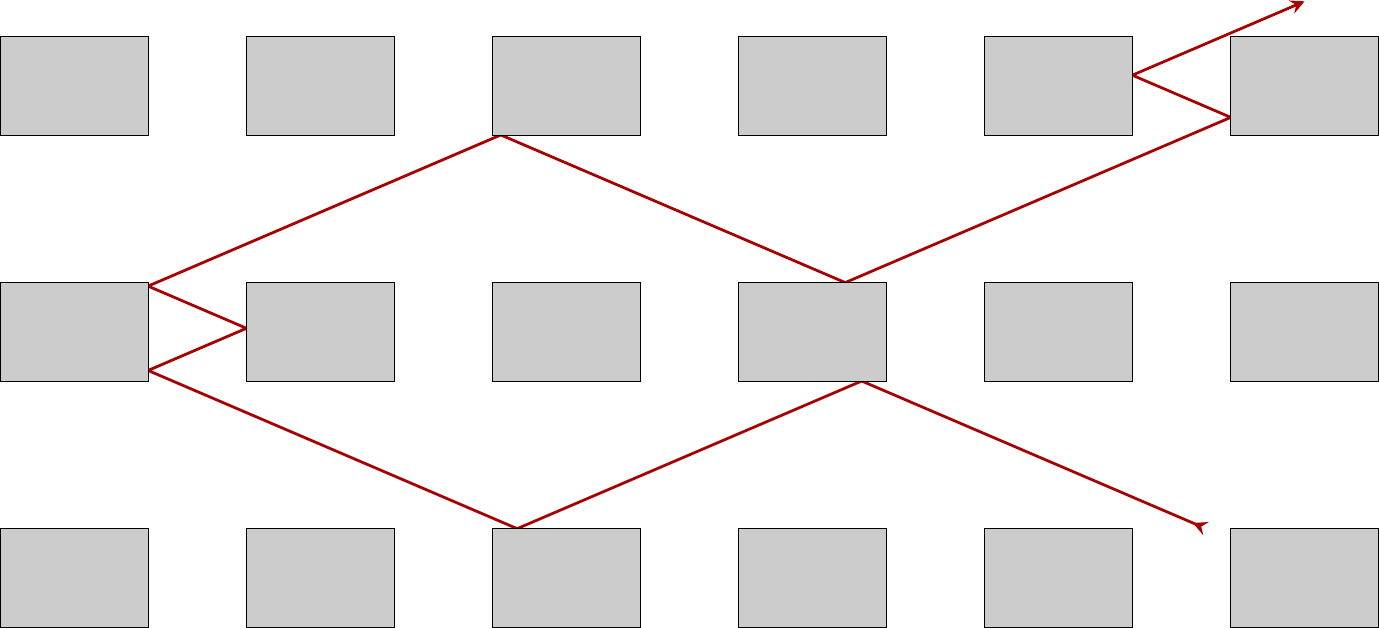}
\caption{The Ehrenfest wind-tree model.}
\label{figu:WTM1}
\end{figure}

The related counting problem has been widely studied in the context of (finite) rational billiards and compact flat surfaces, and it is related to many other questions such as the calculation of the volume of normalized strata (A.~Eskin, H.~Masur and A.~Zorich~\cite{EMZ}) or the sum of positive Lyapunov exponents of the geodesic Teichmüller flow (A.~Eskin, M.~Kontsevich and A.~Zorich~\cite{EKZ}) on strata of flat surfaces (Abelian or meromorphic quadratic differentials).

In this context, these flat surfaces correspond to translation 
surfaces or, equivalently, Abelian 
differentials on Riemann surfaces. Their moduli spaces are stratified according to the combinatorics of zeros of the corresponding Abelian 
differential and there is a natural action of $\SL(2,\R)$ 
on each stratum, which generalizes the action of $\SL(2,\R)$ on the space $\GL(2,\R)/\SL(2,\Z)$ of flat tori.

A connected component of a stratum of translation surfaces is said to be \emph{non-varying} if for every $\SL(2,\R)$-ergodic measure
in that component the above-mentioned sum of Lyapunov exponents is the same, that is, it does not depend on the measure. Such a non-varying phenomenon was observed numerically by M.~Kontsevich and A.~Zorich~\cite{Ko,KZ0} along with the initial observations on Lyapunov exponents for the Teichm\"uller geodesic flow.
Nowadays, there are two types of non-varying results. One for low genus, due to D.~Chen and M.~M\"oller~\cite{CM}, which uses a translation of the problem into algebraic geometry. The other one, for hyperelliptic loci, due to A.~Eskin, M.~Kontsevich and A.~Zorich~\cite{EKZ}, which is a consequence of their main result relating sum of Lyapunov exponents to Siegel--Veech constants.
In particular, the non-varying phenomenon for the sum of Lyapunov exponents is equivalent to the non-varying of Siegel--Veech constants.

Siegel--Veech constants are related to the counting of closed geodesics on translation surfaces. Closed geodesics form cylinders of parallel isotopic closed regular geodesics. Let $N(X,L)$ be the number of (maximal) cylinders of closed geodesics of length at most $L$, weighted by the area covered by the cylinder.

H.~Masur~\cite{Ma1,Ma2} proved that for every translation surface $X$, there are positive constants $c(X)$ and $C(X)$ such that \[c(X) L^2 \leq N(X,L) \leq C(X) L^2\]
for large enough $L$.
W.~Veech~\cite{Ve1} proved that there are in fact exact quadratic asymptotics for a particular type of translation surfaces, the Veech surfaces. A.~Eskin and H.~Masur~\cite{EMa} proved that for each $\SL(2,\R)$-ergodic probability measure $\mu$ on strata of normalized (area~$1$) translation surfaces, there is a constant $c(\mu)$ such that for almost every surface $X$, $N(X,L) \sim c(\mu)\cdot \pi L^2$.

The constant $c(\mu)$ is the Siegel--Veech constant; it is the constant in the Siegel--Veech formula, a Siegel-type formula introduced by W.~Veech~\cite{Ve3}, which can be translated into
\[c(\mu) = \frac{1}{\pi R^2} \int N(X,R)\d\mu(X).\]

The first explicit computations where made by W.~Veech~\cite{Ve1,Ve2}.
A.~Eskin, H.~Masur and A.~Zorich~\cite{EMZ} computed the Siegel--Veech constants for connected components of all strata of Abelian differentials, and also described all possible configurations of cylinders of closed geodesics which might be found on a generic translation surface.
In general, the particular constants for Veech surfaces do not coincide with the Siegel--Veech constants of the stratum where they live. Unless, of course, we face a non-varying phenomenon, as is for example the case of the hyperelliptic components $\hyp(2g-2)$ or $\hyp(g-1,g-1)$, $g>1$.

In this work we study a different but related counting problem: that of cylinders whose core curves pass through two marked regular Weierstrass points on hyperelliptic surfaces in a hyperelliptic component (see \S\ref{sect:hyperelliptic} and \S\ref{sect:Weierstrass}, for precise definitions).
In the generic situation, the Weierstrass points of a translation surface in an hyperelliptic component are symmetric and their numbering is irrelevant to specialized counting. However, as the following non-varying result shows, even in non-generic situations there is some ``hidden symmetry'' that makes the counting symmetric.

\begin{theo} \label{theo:main} 
Let $\mu$ be an $\slr$-ergodic probability measure supported in a hyperelliptic component $\hyp(2g-2)$ or $\hyp(g-1,g-1)$, $g>1$. Then, the (area) Siegel--Veech constant associated to the counting problem of cylinders whose core curve passes through two marked regular Weierstrass points does not depend on $\mu$ and is equal to
\[
\begin{dcases}
\frac{1}{\pi^2}\cdot\frac{1}{2g-1}, & \text{ if } \mathrlap{supp(\mu)\subset \hyp(2g-2),} \\
\frac{1}{\pi^2}\cdot\frac{1}{2g}, & \text{ if }  \mathrlap{supp(\mu)\subset \hyp(g-1,g-1).}
\end{dcases}
\]
\end{theo}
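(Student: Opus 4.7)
\medskip
\textbf{Plan of proof.}
The strategy is to push the counting problem down to the hyperelliptic quotient $\bar X = X/\iota$ and exploit the fact that on genus $0$ strata of quadratic differentials the relevant Siegel--Veech constants are intrinsic to the stratum. I would start by recalling the canonical double cover: for $X\in\hyp(2g-2)$ (resp.\ $\hyp(g-1,g-1)$), the involution $\iota$ descends $\omega^{\otimes 2}$ to a quadratic differential $q$ on $\bar X\cong\CP$, placing $\bar X$ in the stratum $\Q(2g-3,-1^{2g+1})$ (resp.\ $\Q(2g-2,-1^{2g+2})$); the $2g+1$ (resp.\ $2g+2$) regular Weierstrass points of $X$ become exactly the simple poles of $q$, while the zero of $\omega$ becomes a zero of order $2g-3$ of $q$ (resp.\ the two zeros merge into a single zero of order $2g-2$). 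The ergodic measure $\mu$ pushes forward to an $\slr$-ergodic measure $\bar\mu$ on the corresponding genus $0$ stratum.

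The second ingredient is a bijective correspondence between the two counting problems. A cylinder $C\subset X$ whose core passes through two regular Weierstrass points $\tilde p_1,\tilde p_2$ is necessarily $\iota$-invariant, since $\iota$ is an isometry with two fixed points on the core, so $C$ projects two-to-one onto a topological disk in $\bar X$ ``centered'' on a saddle connection $\gamma$ from $p_1=\pi(\tilde p_1)$ to $p_2=\pi(\tilde p_2)$, where $\pi:X\to\bar X$ is the quotient map. Conversely every saddle connection between $p_1$ and $p_2$ on $\bar X$ lifts to a unique such $\iota$-invariant cylinder. Under this bijection the core curve has length $2|\gamma|$ and the cylinder area equals twice the area of the maximal strip around $\gamma$ on $\bar X$. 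Consequently, the Siegel--Veech constant we want on $(X,\mu)$ matches, up to an explicit universal factor coming from the degree $2$ cover, the Siegel--Veech constant on $(\bar X,\bar\mu)$ for counting saddle connections between two marked simple poles, weighted by maximal strip area.

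Finally, I would evaluate this genus $0$ Siegel--Veech constant by combining the framework of Athreya--Eskin--Zorich for quadratic differentials on $\CP$ with the configuration formalism of Eskin--Masur--Zorich. The key geometric point is that a saddle connection between two simple poles carries a canonical ``degenerate cylinder'' structure: the cone angle $\pi$ at each simple pole closes the surrounding strip into a universal local model depending only on $|\gamma|$ and on the distance to the next singularity. This locality is what makes the constant intrinsic to the stratum, and hence independent of $\bar\mu$ (the non-varying phenomenon at the genus $0$ level), and what enables an explicit computation in terms of the combinatorial data $(2g-3,-1^{2g+1})$ or $(2g-2,-1^{2g+2})$. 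The main obstacle will be precisely this non-varying step at the genus $0$ level, followed by a careful bookkeeping of the factor of $2$ between $X$ and $\bar X$ (in lengths, areas and measures), in order to recover exactly the denominators $2g-1$ and $2g$ in the stated values $\frac{1}{\pi^2(2g-1)}$ and $\frac{1}{\pi^2\cdot 2g}$.
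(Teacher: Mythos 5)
Your reduction to the quotient sphere is the same first step as the paper: a cylinder in $X$ through the two marked regular Weierstrass points corresponds to a cylinder in $Y=X/\iota\in\Q(d,-1^{d+4})$ having a saddle connection joining the two marked poles $p_1,p_2$ as a boundary component, and the factor of $2$ is handled by \cite[Lemma~1.1]{EKZ}. The gap is in your final step. You assert that the Siegel--Veech constant for the \emph{marked-poles} counting on the genus-zero stratum is ``intrinsic to the stratum'' because of a locality property of the strip around a saddle connection between two simple poles, invoking Athreya--Eskin--Zorich. But the Athreya--Eskin--Zorich computation is valid only for the Masur--Veech (generic) measure, and the non-varying result of Eskin--Kontsevich--Zorich on genus-zero quotients applies only to the \emph{total} constant $c(Y)$, not to the refined constant attached to a distinguished pair of poles. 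No locality principle can supply the missing step: Section~4 of the paper exhibits genus-zero surfaces (e.g.\ in $\Q(d,d,-1^{2d+4})$) on which the marked-poles constant takes different values for different pairs of poles on the same surface and disagrees with the generic value, so the statement you are implicitly using is false outside the special strata $\Q(d,-1^{d+4})$. Your argument as written never uses the feature that actually makes the theorem true, namely that $Y$ has a single zero, so that \emph{every} cylinder has one boundary equal to a saddle connection between two poles.

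The paper closes this gap with a covering trick you would need to add. Writing $c_{pq}$ for the Siegel--Veech constant of the configuration of cylinders whose distinguished boundary contains $p_1$ with multiplicity $p$ and $p_2$ with multiplicity $q$, the single-zero property gives the exhaustive decomposition $c(Y)=c_{00}+c_{01}+c_{10}+c_{11}$. One then forms the three double covers of $Y$ ramified respectively at $\{z,p_1\}$, $\{z,p_2\}$ and $\{p_1,p_2\}$; all are again genus zero by Riemann--Hurwitz, and the monodromy of each cylinder around the branch points determines whether its configuration constant is multiplied by $2$ or by $1/2$ upstairs. This yields an invertible $4\times 4$ linear system expressing $c_{11}=\tfrac13\bigl[2c(Y)-c(Y_{01})-c(Y_{10})+c(Y_{11})\bigr]$ purely in terms of \emph{total} constants of genus-zero surfaces, each of which is non-varying and explicitly computable by \cite[Theorem~3]{EKZ}. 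Without this (or an equivalent mechanism) your proposal establishes the generic case only, which the paper notes is already known and trivial.
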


In the generic situation, that is, when $\mu$ is the Masur--Veech measure, this result is trivial since markings of regular Weierstrass points are symmetric and the (non-specialized) Siegel--Veech constants are known. Moreover, J.~Athreya, A.~Eskin and A.~Zorich~\cite{AEZ} independently computed this constants for every hyperelliptic loci (not only for hyperelliptic components), but again, only in the generic case.
Here, we prove the non-varying result and compute the exact value for \emph{every} $\slr$-ergodic probability measure on hyperelliptic components.
Moreover, our computations do not depend on the work of J.~Athreya, A.~Eskin and A.~Zorich~\cite{AEZ}.

It is a natural question whether this non-varying phenomenon takes place in every hyperelliptic locus as well, as is the case for the Siegel--Veech constant associated to the (non-specialized) counting of every cylinder.
We shall see that this is not true in general.
The main reason that makes our arguments work on hyperelliptic components only is that there, every cylinder passes through exactly two different regular Weierstrass points, and this is no longer true in any other hyperelliptic loci.

\subsection{Application to the wind-tree model}
\label{sect:application}

One of the motivations of this work is an application to the wind-tree model.
The wind-tree model corresponds to a billiard in the plane endowed with $\Z^2$-periodic obstacles of rectangular shape aligned along the lattice, as in Figure~\ref{figu:WTM1}. We denote by $\Pi(a,b)$ the wind-tree model whose obstacles have dimensions $(a,b)\in\left]0,1\right[^2$.

The wind-tree model (in a slightly different version) was introduced by P.~Ehrenfest and T.~Ehrenfest~\cite{EE} in 1912. J.~Hardy and J.~Weber \cite{HW} studied the periodic version. All these studies had physical motivations.

Several advances on the dynamical properties of the billiard flow in the wind-tree model were obtained using geometric and dynamical properties on moduli space of (compact) translation surfaces.
A.~Avila and P.~Hubert~\cite{AH} showed that for all parameters of the obstacle and for almost all directions, the trajectories are recurrent. There are examples of divergent trajectories constructed by V.~Delecroix~\cite{D}. The non-ergodicity was proved by K.~Fr\c{a}cek and C.~Ulcigrai~\cite{FU}. It was proved by V.~Delecroix, P.~Hubert and S.~Leli\`evre~\cite{DHL} that the diffusion rate is independent either on the concrete values of the parameters of the obstacle or on almost any direction and almost any starting point and is equals to $2/3$. A generalization of this last result was shown by V.~Delecroix and A.~Zorich \cite{DZ} for more complicated obstacles.

The result of V.~Delecroix, P.~Hubert and S.~Leli\`evre about the diffusion rate \emph{evince} a first non-varying phenomenon in the case of the classical wind-tree model, which corresponds to the `sum of Lyapunov exponents' counterpart. In this work we describe the `Siegel--Veech constant' counterpart of the non-varying phenomenon.

The author~\cite{Pa} studied the counting problem on wind-tree models proving that the number of periodic trajectories has quadratic asymptotic growth rate and computed, in the generic case, the Siegel--Veech constants for the classical wind-tree model as well as for the Delecroix--Zorich variant.
In this work we prove that, for the classical wind-tree model, this constant does not depend on the dimensions of the obstacles, exhibiting a non-varying phenomenon analogous to the one described above. More precisely, as a direct consequence of Theorem~\ref{theo:main}, we have the following.

\begin{theo} \label{theo:main wind-tree}
Let $N (\Pi(a,b),L)$ be the number of maximal families of isotopic periodic trajectories (up to $\Z^2$-translations) of length at most $L$ in $\Pi(a,b)$, weighted by the area covered by the family. Then,
\begin{itemize}[leftmargin=*]
\item For Lebesgue-almost every $a,b\in\left]0,1\right[$ and, in particular, if $a,b$ are rational or can be written as $1/(1-a) = x+z\sqrt{D}$ and $1/(1-b) = y + z\sqrt{D}$ with $x, y, z \in\mathbb{Q}$ and $x+y=1$ and $D$ a positive square-free integer, we have \[N (\Pi(a,b),L) \sim \frac{1}{3\pi^2} \cdot \frac{\pi L^2}{1-ab}.\]
\item In any other case, we have the weak asymptotic formula
\[ \int_0^L N (\Pi(a,b),\ell)\d\ell \sim 
  \frac{1}{3\pi^2} \cdot \frac{\pi L^2}{1-ab}.\]
\end{itemize}
\end{theo}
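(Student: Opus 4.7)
The plan is to deduce Theorem~\ref{theo:main wind-tree} from Theorem~\ref{theo:main} using the translation surface dictionary for the wind-tree model developed in~\cite{Pa}.

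First, I would recall the unfolding and quotient construction that associates to each pair $(a,b)\in{]0,1[}^2$ a genus two translation surface $Y(a,b)$ in $\H(2)=\hyp(2)$. Periodic billiard trajectories in $\Pi(a,b)$ are obtained by unfolding over the four sides of an obstacle, collapsing the $\Z^2$-translation symmetry to a compact surface in $\H(2,2,2,2)$, and then quotienting by an appropriate holonomy involution, as in~\cite{DHL, Pa}; the resulting $Y(a,b)$ has area proportional to $1-ab$. Under this dictionary, $\Z^2$-equivalence classes of maximal families of parallel periodic trajectories in $\Pi(a,b)$ are in length-preserving correspondence with cylinders on $Y(a,b)$ whose core curves pass through two specific marked regular Weierstrass points, the area weighting matching up to an explicit normalization factor accounting for the degree of the cover.

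Next, Theorem~\ref{theo:main} applied with $g=2$ to the hyperelliptic component $\hyp(2)$ gives, for every $\slr$-ergodic probability measure $\mu$ supported on the $\slr$-orbit closure of $Y(a,b)$, the Siegel--Veech constant
\[
c(\mu)=\frac{1}{\pi^{2}}\cdot\frac{1}{2g-1}=\frac{1}{3\pi^{2}}
\]
for cylinders through two marked regular Weierstrass points, independently of $\mu$. Combined with the area scaling from the unfolding, this yields the coefficient $\frac{1}{3\pi^{2}(1-ab)}$ in front of $\pi L^{2}$.

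It remains to convert the Siegel--Veech formula, which a priori yields only the weak (Ces\`aro averaged) asymptotics of the second bullet valid for every $(a,b)$, into pointwise quadratic asymptotics. For Lebesgue-almost every $(a,b)$, the orbit of $Y(a,b)$ is generic for the Masur--Veech measure on $\hyp(2)$, so the Eskin--Masur theorem provides the pointwise asymptotic with the same constant. For the listed arithmetic and quadratic-irrational parameters, $Y(a,b)$ is a Veech surface (the quadratic-irrational family being precisely the one placing $Y(a,b)$ in a McMullen eigenform locus of genus two, while rational $(a,b)$ yield square-tiled surfaces), and Veech's theorem produces exact pointwise asymptotics with a Siegel--Veech constant now pinned down by the non-varying input of Theorem~\ref{theo:main}. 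The only substantive obstacle is bookkeeping: tracking the area of $Y(a,b)$, the length rescaling of the unfolding, and the multiplicity of the correspondence, so as to recover the exact coefficient $\frac{1}{3\pi^{2}(1-ab)}$; since this is already handled in~\cite{Pa}, the theorem follows as a direct consequence of Theorem~\ref{theo:main}.
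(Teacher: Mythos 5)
Your proposal is correct and follows essentially the same route as the paper: reduce via the correspondence in~\cite{Pa} (the paper cites \cite[Corollary~5.6]{Pa} for the identification of the wind-tree counting with cylinders through two marked regular Weierstrass points on a genus-two surface in $\hyp(2)$, and \cite[Theorem~1.2]{Pa} together with \cite[Theorem~1.7]{AEZ} for the pointwise-versus-weak asymptotic dichotomy), then apply Theorem~\ref{theo:main} with $g=2$ to pin the constant at $1/3\pi^2$. The extra detail you supply on genericity and the Veech-surface cases is exactly the content the paper delegates to those citations.
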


\begin{proof}
The statement is a compilation of several different results and is equivalent to $c (\Pi(a,b))=1/3\pi^2$ (cf.~\cite[Theorem~1.7]{AEZ} and \cite[Theorem~1.2]{Pa}).
By \cite[Corollary~5.6]{Pa}, the counting problem on the wind-tree model coincides with the counting problem of cylinders whose core curve passes through two marked regular Weierstrass points on a surface $L(a,b)\in\tilde\Q(1,-1^{5})=\H(2)$.

By elementary considerations on the Siegel--Veech formula (cf.~\cite[Lemma~1.1]{EKZ}) combined with the lifting properties of cylinders in $L(a,b)$ (see for example \cite[Lemma~3]{AH}), we have that $c (\Pi(a,b))$ coincides with the Siegel--Veech constant associated to the corresponding counting on $L(a,b)$.

Thus, by Theorem~\ref{theo:main}, we conclude that $c (\Pi(a,b))=1/3\pi^2$.
\end{proof}

\subsection{Strategy of the proof}
Hyperelliptic surfaces are orientation double covers of meromorphic quadratic differentials with at most simple poles on the Riemann sphere.
From a hyperelliptic surface $X$ in a hyperelliptic component $\hyp(2g-2)$ or $\hyp(g-1,g-1)$, $g>1$, and given two fixed regular Weierstrass points, we build three different translation surfaces which are covering of the original surface $X$. These coverings turn out to be hyperelliptic surfaces as well.
In order to treat both components at the same time, we work directly in the strata of quadratic differentials $\Q(d,-1^{d+4})$.
The problem of counting closed geodesics passing through two marked regular Weierstrass points reduces to the problem of counting saddle connections joining two marked poles.

The coverings mentioned above are double covers built by choosing two ramification points in $\CP$ and pulling back the quadratic differential in $\Q(d,-1^{d+4})$. Thus, by Riemann--Hurwitz formula, it is clear that they correspond to quadratic differentials on the Riemann sphere as well.

We introduce some configurations of cylinders associated to the marked poles and the ramification locus of these coverings, and describe the counting of cylinders having a saddle connection joining the two marked poles as a boundary components, in terms of one of these configurations.

We relate then the Siegel--Veech constants of the configurations of cylinders in $\Q(d,-1^{d+4})$ to their liftings on the coverings.
Decomposing the Siegel--Veech constants of the involved surfaces in terms of these configurations, we obtain a system of equations which allows us to describe the Siegel--Veech constants of the configurations in terms of those of the surfaces. 
Since the surfaces are genus zero, thanks to Eskin--Kontsevich--Zorich~\cite{EKZ}, the result is non-varying.
Describing the strata where the surfaces lie and putting the values of the corresponding Siegel--Veech constants in the expression allows us to compute explicitly the value of the Siegel--Veech constant associated to the configurations, and therefore, the one associated to the counting of cylinders bounded by the marked poles.
Coming back to the orientation double cover we obtain the result on the counting of cylinders whose core curve passes through the two Weierstrass points.

We present two families of counterexamples for hyperelliptic loci which are not hyperelliptic components. We exhibit hyperelliptic surfaces where the Siegel--Veech constant associated to the counting of cylinders whose core curve passes through two marked Weierstrass points does not coincide with the corresponding Siegel--Veech constant on the hyperelliptic loci where they lie. For this, in a first example, we use one of the covers defined above, which lies in a hyperelliptic locus which is not a hyperelliptic component. We find two families of pairs of regular Weierstrass points and show that the average Siegel--Veech constant associated to each family does not coincide.
Moreover, using a result of Athreya--Eskin--Zorich~\cite{AEZ}, we show that neither of this values coincide with the generic value on the stratum.

In a second example, we show that there are hyperelliptic surfaces with pairs of regular Weierstrass points which cannot be joined by closed geodesics. In particular, the relevant Siegel--Veech constant is zero in this case, while the generic value is strictly possitive.

For both families of examples we work again with quadratic differentials on $\CP$ and then deduce the corresponding result for hyperelliptic surfaces.

\subsection{Structure of the paper}
In \S\ref{sect:background} we briefly recall all the background necessary to formulate and prove the results. 
In \S\ref{sect:case} we prove Theorem~\ref{theo:main}. We describe the covering construction in \S\ref{sect:hyperelliptic coverings}. In \S\ref{sect:configurations} we introduce the associated configurations of cylinders, one of them coinciding with the one relevant to our problem.
We describe the system of equations that the associated Siegel--Veech constants satisfy and find the desired value.

We present in \S\ref{sect:counterexample} the two families of counterexamples, providing the values of (the average of) the pertinent Siegel--Veech constants for the counterexamples as well as for the generic case.

\subsection*{Acknowledgements}
The author is grateful to Pascal Hubert for useful discussions.
The author is thankful to the anonymous referee for comments and suggestions that improved the presentation of the paper.

\section{Background} \label{sect:background}

\subsection{Flat surfaces}
For an introduction and general references to this subject, we refer the reader to the surveys of Zorich~\cite{Zo}, Forni--Matheus~\cite{FM}, Wright~\cite{Wr2}.

\subsubsection{Flat surfaces and strata}
Let $g\geq 1$, $\{n_1,\dots,n_k\}$ be a partition of $2g-2$ and $\H(n_1,\dots,n_k)$ denote a stratum of Abelian differentials, that is, the space holomorphic $1$-forms on Riemann surfaces of genus $g$, with zeros of degree $n_1,\dots,n_k\in\N$.
There is a one to one correspondence between Abelian differentials and translation surfaces, surfaces which can be obtained by edge-to-edge gluing of polygons in $\R^2$ using translations only. Thus, we refer to elements of $\H(n_1,\dots,n_k)$ as translation surfaces.
A translation surface has a canonical flat metric, the one obtained form $\R^2$, with conical singularities of angle $2\pi(n+1)$ at zeros of degree~$n$ of the Abelian differential.

We also consider strata $\Q(d_1,\dots,d_k)$ of meromorphic quadratic differentials with at most simple poles on Riemann surfaces with zeros of order $d_1,\dots,d_k$, $d_i\in\{-1\}\cup\N$ for $i=1,\dots,k$ (in a slight abuse of vocabulary, we are considering poles as zeros of order~$-1$) and $\sum_{i=1}^k d_i=4g-4$.
A meromorphic quadratic differential also defines a canonical flat metric with conical singularities of angle $\pi(d+2)$ at zeros of order~$d$.

In this paper, a meromorphic quadratic differential is not the square of an Abelian differential. This condition is automatically satisfied if at least one of parameters $d_j$ is odd.

\paragraph{Notation}
As usual, we use ``exponential'' notation to denote multiple zeroes (or simple poles) of the same degree, for example $\Q(1,-1^5)=\Q(1,-1,-1,-1,-1,-1)$.

A flat surface is a Riemann surface with the flat metric corresponding to an Abelian or meromorphic quadratic differential.

\subsubsection{Canonical orientation double cover}
One can canonically associate with every meromorphic quadratic differential $q$ on a Riemann surface $S$ another connected curve with an Abelian differential on it. It is the unique double covering of $S$ (possibly ramified at singularities of $q$) such that the pullback of $q$ is the square of an Abelian differential.

\paragraph{Notation} We denote by $\tilde\Q(d_1,\dots,\d_k)$ the locus of translation surfaces consisting on the canonical orientating double cover of surfaces in the strata of half-translation surfaces $\Q(d_1,\dots,\d_k)$.

\subsubsection{Hyperelliptic surfaces, loci and components} \label{sect:hyperelliptic}
We say that a translation surface $X$ is a hyperelliptic surface if it corresponds to the canonical orientation double cover of a meromorphic quadratic differential on a Riemann surface of genus zero, that is, on the sphere $\CP$. Equivalently, if $X\in\tilde\Q(d_1,\dots,d_k)$ with $\sum_{j=1}^{k} d_j = -4$ and, in this case, we say that $\tilde\Q(d_1,\dots,d_k)$ is a hyperelliptic locus.

There is a series of hyperelliptic loci which plays a special role: $ \Q(d,-1^{d+4})$, for $d \geq 1$.
In these cases, the hyperelliptic loci coincides with a connected component of the corresponding stratum (see~\cite[\S2.1]{KZ}), the hyperelliptic compontent, which is denoted by
\[ \tilde\Q(d,-1^{d+4}) = \begin{cases} \hyp(2g-2), & \text{ if } d = 2g-3, \\ \hyp(g-1, g-1), & \text{ if } d = 2g-2. \end{cases}\]

\subsubsection{Weierstrass points} \label{sect:Weierstrass}
Every translation surface obtained as an orientation covering comes with an involution. In the case of hyperelliptic surfaces, we call it the hyperelliptic involution. The hyperelliptic involution of a hyperelliptic surface of genus $g$ has exactly $2g+2$ fixed points. These fixed points are called Weierstrass points. We say that a Weierstrass point is regular if it is regular for the flat metric, that is, if it is not a conical singularity. Note that regular Weierstrass points are exactly those points that projects to poles in the corresponding meromorphic quadratic differential on the sphere.

\subsection{Counting problem}
We are interested in the counting of closed geodesics of bounded length on translation surfaces.
Together with every closed regular geodesic in a translation surface $X$ we have a bunch of parallel closed regular geodesics. 
A cylinder on a flat surface is a maximal open annulus filled by isotopic simple closed regular geodesics. A cylinder $C$ is isometric to the product of an open interval and a circle, its core curve $\gamma_C$ is the geodesic projecting to the middle of the interval and its length $l(C)$ is the circumference of the circle.
A saddle connection is a geodesic joining two different singularities or a singularity to itself, with no singularities in its interior. Cylinders are always bounded by parallel saddle connections.

The number of cylinders of bounded length is finite. Thus, for any $L > 0$ the following quantity is well-defined:
\[N (X,L)=\frac{1}{\area(X)}\sum_{\substack{C\subset X \\ l(C) \leq L}} \area(C),\]
where the sum is over all cylinders $C$ in $X$ of length bounded by $L$.

\subsubsection{Siegel--Veech constant}
The following theorem is a special case of a fundamental result of Veech~\cite{Ve3}, considered by Vorobets in \cite{Vo2}.

\begin{theo*}[Veech] Let $\nu$ be an ergodic $\slr$-invariant probability measure on a stratum $\H_1(n_1,\dots,n_k)$ of Abelian differentials of area one. Then, the following ratio is constant (i.e. does not depend on the value of a positive parameter $R$):
\[ c (\nu) = \frac{1}{\pi R^2} \int N  (X, R) \d\nu.\]
\end{theo*}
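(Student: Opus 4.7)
The plan is to realize $N(X,R)$ as the value of a Siegel--Veech transform and to exploit $\slr$-invariance of $\nu$ to reduce the constancy in $R$ to the uniqueness of Haar measure on $\R^2\setminus\{0\}$. For a compactly supported continuous function $f\colon \R^2\setminus\{0\}\to\R$, define
\[\hat f(X)=\frac{1}{\area(X)}\sum_{C\subset X}\area(C)\,f(v_C),\]
where the sum is over cylinders $C\subset X$ and $v_C\in\R^2$ is the holonomy of the core curve of $C$. Masur's quadratic upper bound $N(X,L)\le C(X)L^2$ together with the integrability of $X\mapsto C(X)$ (essentially the Eskin--Masur estimate on the measure of the cusp $\{X:\text{sys}(X)\le\varepsilon\}$) shows that $\hat f\in L^1(\nu)$, so that
\[L_\nu(f)=\int\hat f(X)\,\d\nu(X)\]
is a well-defined positive linear functional on $C_c(\R^2\setminus\{0\})$.

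The key structural observation is the equivariance $\hat f(gX)=\widehat{f\circ g^{-1}}(X)$ for $g\in\slr$: the $\slr$-action transports a cylinder of holonomy $v$ and area $a$ on $X$ to a cylinder of holonomy $gv$ and the same area $a$ on $gX$, and preserves the total area (determinant one). Combining this with $g_*\nu=\nu$ yields $L_\nu(f\circ g)=L_\nu(f)$ for every $g\in\slr$, so $L_\nu$ is $\slr$-invariant. By Riesz representation, $L_\nu$ is given by integration against a positive Radon measure on $\R^2\setminus\{0\}$ that is invariant under the linear $\slr$-action.

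Since $\slr$ acts transitively on $\R^2\setminus\{0\}$ with stabilizer conjugate to the (unimodular) unipotent subgroup, there is a unique (up to scaling) $\slr$-invariant Radon measure on $\R^2\setminus\{0\}$, namely Lebesgue measure. Hence there exists a constant $c(\nu)\ge 0$ with
\[L_\nu(f)=c(\nu)\int_{\R^2}f(v)\,\d v\qquad\text{for every } f\in C_c(\R^2\setminus\{0\}).\]
Approximating $\ind_{B_R}$ from above and below by compactly supported continuous functions supported away from $0$, and using that for $\nu$-a.e.\ $X$ no cylinder has holonomy of length exactly $R$, we pass to the limit and obtain
\[\int N(X,R)\,\d\nu(X)=c(\nu)\,\pi R^2,\]
which is exactly the asserted constancy of the ratio.

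The main obstacle is the integrability $\hat f\in L^1(\nu)$: the formal $\slr$-invariance argument identifies $L_\nu$ with a multiple of Lebesgue measure for free, but only the Eskin--Masur control on the cusp prevents the constant $c(\nu)$ from being infinite. Everything else (the equivariance of $\hat f$, the transitivity of $\slr$ on $\R^2\setminus\{0\}$, and the Riesz/approximation steps) is essentially formal once the counting function is viewed through the Siegel--Veech transform.
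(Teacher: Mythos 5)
Your argument is correct, but note that the paper itself offers no proof of this statement: it is imported verbatim from Veech's ``Siegel measures'' paper (as considered by Vorobets), and what you have written is precisely the standard proof from that literature --- the Siegel--Veech transform, $\slr$-equivariance plus invariance of $\nu$ to produce an $\slr$-invariant positive functional on $C_c(\R^2\setminus\{0\})$, uniqueness (up to scale) of the invariant Radon measure on the homogeneous space $\slr/N\cong\R^2\setminus\{0\}$, and the Eskin--Masur cusp estimate to guarantee finiteness of the constant. Two small points deserve correction, neither of which is a real gap. First, with holonomies transforming by $v_C\mapsto g\,v_C$ and areas preserved, the equivariance reads $\hat f(gX)=\widehat{f\circ g}(X)$ rather than $\widehat{f\circ g^{-1}}(X)$; this is only a convention issue and the conclusion $L_\nu(f\circ g)=L_\nu(f)$ is unaffected. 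Second, you cannot approximate $\ind_{B_R}$ \emph{from above} by functions in $C_c(\R^2\setminus\{0\})$, since any such majorant would have to exceed $1$ near the origin where these functions vanish; the fix is to approximate only from below (monotone convergence gives the open ball of radius $R$) and then let the radius decrease to $R$ through larger values, using dominated convergence, to capture the closed ball; the a.e.\ absence of cylinders of length exactly $R$ is then not needed. With these adjustments the proof is complete and is the same route as the cited original, so there is nothing methodologically new relative to the source the paper relies on.
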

This is called the Siegel--Veech formula, and the corresponding constant $c (\nu)$ is the Siegel--Veech constant.

A fundamental result of Eskin--Mirzhakani--Mohammadi~\cite{EMM} says that every $\slr$-orbit closure $\M$ is an affine invariant manifold and, in paricular, it is the support of an affine invariant measure $\nu_\M$ (see~\cite{EMM,EMi} for the precise definitions). For simplicity, we denote $c (\M)=c (\nu_\M)$.

We call a configuration of cylinders on an affine invariant manifold $\M$, a continuous $\slr$-equivariant application $\C$ which associates to $X\in\M$ (or any finite cover of $\M$) a collection of cylinders in $X$ (cf.~\cite{EMZ}). The previous discussion on the counting problem and Siegel--Veech constants applies as well in the case of configurations of cylinders and we denote by $c (\M,\C)$ the corresponding Siegel--Veech constant.

\paragraph{Notation} For a translation surface $X$, we denote by $c (X)$, the Siegel--Veech constant associated to the affine invariant measure $\nu_\M$ supported on its $\slr$-orbit closure $\M=\overline{\slr X}$. That is \[c (X)\coloneqq c (\M) = \frac{1}{\pi R^2}\int_{\M} N (Y,R)\d\nu_{\M}(Y).\]
Similarly, for a configuration of cylinders $\C$ defined on $\M$, we denote by $c (X,\C)$, the corresponding Siegel--Veech constant, $c (X,\C)=c (\M,\C)$.

As we are interested on Siegel--Veech constants associated to hyperelliptic surfaces, it is useful to relate them with the corresponding Siegel--Veech constants on the sphere they cover. We have the following general result, valid for any orientation double cover, due to Eskin--Kontsevich--Zorich~\cite[Lemma~1.1]{EKZ}.

\begin{lemm}\label{lemm:c=2c}
Let $X\in\tilde\Q(d_1,\dots,d_k)$ obtained from $Y\in\Q(d_1,\dots,d_k)$ by the orientation double cover construction. Then, $c (X) = 2c (Y)$.
\end{lemm}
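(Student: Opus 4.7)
The plan is to integrate the Siegel--Veech formula on both sides of the orientation cover $\pi\colon X \to Y$, using the $\slr$-equivariance of the covering construction.

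First, I would analyze how a cylinder $C\subset Y$ lifts through $\pi$. Since the restriction of $\pi$ to $\pi^{-1}(C)$ is a double cover of the annulus $C$, classified by the monodromy of $\pi$ around the core curve of $C$, two cases arise: either $\pi^{-1}(C)$ is a pair of disjoint cylinders on $X$ each isometric to $C$ (trivial monodromy), or $\pi^{-1}(C)$ is a single cylinder on $X$ of circumference $2\ell(C)$ and area $2\area(C)$ (non-trivial monodromy). In both cases $\area(\pi^{-1}(C)) = 2\area(C)$, in agreement with $\area(X) = 2\area(Y)$.

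Next, on the measure side, the composition of $Y \mapsto X(Y)$ with the rescaling by $1/\sqrt 2$ needed to normalize $\area(X(Y))$ from $2$ back to $1$ is an $\slr$-equivariant bijection between the area-one loci of $\M_Y = \overline{\slr \cdot Y}$ and $\M_X = \overline{\slr \cdot X}$ which transports $\nu_{\M_Y}$ to $\nu_{\M_X}$. Moreover the rescaling preserves the area ratio $\area(C)/\area(X)$ while dividing lengths by $\sqrt 2$, so that $N(X(Y)/\sqrt 2, R) = N(X(Y), R\sqrt 2)$.

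Combining these two ingredients, the cylinder-lifting analysis gives $N(X(Y), L) = N(Y, L)$ pointwise in $L$, under the natural convention in which a cylinder on $Y$ with non-trivial monodromy is assigned the length of its (double-wrapping) lift on $X$. Hence
\[
\pi R^2\, c(X) = \int_{\M_X} N(X', R)\,\d\nu_{\M_X} = \int_{\M_Y} N(Y, R\sqrt 2)\,\d\nu_{\M_Y} = c(Y)\,\pi(R\sqrt 2)^2 = 2\,c(Y)\,\pi R^2,
\]
giving $c(X) = 2c(Y)$.

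The hard part will be the first step, specifically the monodromy-based bookkeeping and aligning length conventions on $Y$ and $X$. An alternative route, sidestepping the convention altogether, is to split $N(Y, L)$ into the contributions of cylinders with trivial versus non-trivial monodromy and track both through the integral: the area doubling together with the $\sqrt 2$-length rescaling exactly balance the $2$-to-$1$ versus $1$-to-$1$ lift discrepancy, producing the factor $2$ on the Siegel--Veech constants.
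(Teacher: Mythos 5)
The paper offers no proof of this lemma at all: it is quoted verbatim from Eskin--Kontsevich--Zorich \cite[Lemma~1.1]{EKZ}, so your argument has to stand on its own. Its architecture --- lift cylinders through $\pi\colon X\to Y$, transport the measure, rescale by $1/\sqrt2$ to renormalize the area --- is the right one, and the measure-theoretic and rescaling steps are fine as a sketch. The gap is in the cylinder-lifting step. With the paper's definition of $N(Y,L)$, the length of a cylinder is the circumference of \emph{its own} core curve on $Y$. A cylinder with non-trivial monodromy, of circumference $w$ and area $A$, contributes $A/\area(Y)$ to $N(Y,L)$ as soon as $L\ge w$, whereas its single lift has circumference $2w$ and area $2A$ and contributes $2A/\area(X)=A/\area(Y)$ to $N(X,L)$ only for $L\ge 2w$. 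Pushing this through the $\sqrt2$-rescaling, trivial-monodromy cylinders contribute to $c(X)$ with a factor $2$ but non-trivial ones with a factor $1/2$ --- exactly the dichotomy of Lemma~\ref{lemm:relation}. So your closing claim that the two cases ``exactly balance, producing the factor $2$'' is false, and your ``natural convention'' of assigning to a non-trivially covered cylinder the length of its lift silently replaces $N(Y,\cdot)$ by a different counting function, so the constant you end up comparing with $c(X)$ is no longer the $c(Y)$ of the statement.

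The missing ingredient that rescues the statement is that the non-trivial case never occurs for the orientation double cover. The core curve of a cylinder is a closed regular geodesic contained in an embedded flat annulus, so its developing map is a translation and its linear holonomy is $+1$; since the monodromy of the orientation cover along a loop is precisely the linear holonomy of the flat metric (the obstruction to $q$ admitting a square root along the loop), every cylinder of $Y$ lifts to two disjoint cylinders isometric to it. With that observation in place, $N(X,L)=N(Y,L)$ holds with the standard length conventions and your computation goes through. Note that this is special to the orientation cover: for the covers $Y_{ij}$ used later in the paper, non-trivial monodromy genuinely occurs, which is exactly why Lemma~\ref{lemm:relation} must distinguish the two cases and why the factors $2$ and $1/2$ both appear there.
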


\subsection{Non-varying phenomenon on hyperelliptic loci}\label{sect:non-varying}
The following result summarize the non-varying phenomenon for Siegel--Veech constants observed on hyperelliptic loci by Eskin--Kontsevich--Zorich~\cite[Theorem~3 and Lemma~1.1]{EKZ}.

\begin{theo}[Eskin--Kontsevich--Zorich] \label{theo:EKZ}
Let $X$ be a hyperelliptic surface such that the quotient sphere belongs to $\Q(d_1,\dots,d_k)$. That is, $X\in\tilde\Q(d_1,\dots,d_k)$ with $\sum_{j=1}^{k} d_j = -4$. Then
\begin{equation}\label{equa:EKZ}
c (X) = -\frac{1}{4\pi^2}\sum_{j=1}^{k} d_j \frac{d_j+4}{d_j + 2}.
\end{equation}
\end{theo}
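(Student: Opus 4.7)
The plan is to combine the two results cited by the authors, namely the Eskin--Kontsevich--Zorich formula expressing the sum of non-negative Lyapunov exponents of the Hodge bundle as a combinatorial contribution depending on the orders of zeros and poles plus a multiple of the area Siegel--Veech constant, together with Lemma~\ref{lemm:c=2c}.

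First I would apply the Eskin--Kontsevich--Zorich formula to the projection $Y \in \Q(d_1,\dots,d_k)$ on $\CP$, whose existence is guaranteed by the hypothesis that $X$ is hyperelliptic and $\sum d_j = -4$. The formula has the schematic form
\[
\sum \lambda_i^+ \;=\; \kappa(d_1,\dots,d_k) \;+\; \frac{\pi^2}{3}\, c(Y),
\]
where $\kappa$ is the explicit combinatorial quantity proportional to $\sum_j d_j(d_j+4)/(d_j+2)$. The decisive observation is that the underlying Riemann surface of $Y$ has genus zero, so the Hodge bundle is trivial and the left-hand side vanishes identically. This forces $c(Y)$ to equal $-3\kappa/\pi^2$, which is an explicit rational number depending only on the combinatorics of $(d_1,\dots,d_k)$. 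In particular $c(Y)$ is the same for every $\slr$-invariant measure supported in $\Q(d_1,\dots,d_k)$, which gives the non-varying phenomenon for free.

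Next I would lift back to the canonical orientation double cover using Lemma~\ref{lemm:c=2c}, which states $c(X) = 2 c(Y)$. Composing with the formula for $c(Y)$ from the previous step produces exactly the right-hand side of \eqref{equa:EKZ}, provided one has tracked the normalizing constants correctly.

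The main obstacle here is purely bookkeeping: keeping straight the precise coefficient in the Kontsevich--Zorich formula for \emph{quadratic} differentials (as opposed to the original Abelian version), together with the factor $2$ in Lemma~\ref{lemm:c=2c}, to ensure that the resulting coefficient in front of $\sum_j d_j(d_j+4)/(d_j+2)$ is $-1/(4\pi^2)$ and not, say, $-1/(8\pi^2)$ or $-1/(2\pi^2)$. All the conceptual content is contained in the two inputs from~\cite{EKZ}; the remainder is straightforward algebra.
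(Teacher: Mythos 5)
Your proposal is correct and matches the intended derivation: the paper states this result as a direct quotation of \cite[Theorem~3 and Lemma~1.1]{EKZ}, and EKZ's Theorem~3 is obtained exactly as you describe, by specializing their Lyapunov-exponent/Siegel--Veech formula for quadratic differentials to a genus-zero base (where the invariant part of the Hodge bundle is trivial, so the sum of exponents vanishes and $c(Y)=-\frac{1}{8\pi^2}\sum_j d_j\frac{d_j+4}{d_j+2}$) and then doubling via Lemma~\ref{lemm:c=2c}. The only point to keep straight is that the vanishing left-hand side is the sum of exponents of the \emph{invariant} subbundle $H^1_+\cong H^1(\CP)$, with combinatorial term $\frac{1}{24}\sum_j d_j\frac{d_j+4}{d_j+2}$, which yields precisely the coefficient $-\frac{1}{4\pi^2}$ after the factor of $2$.
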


It is clear, by Lemma~\ref{lemm:c=2c}, that the non-varying phenomenon on Theorem~\ref{theo:EKZ} is also valid for genus zero translation surfaces, with half the value of the Siegel--Veech constant.

\section{Refined non-varying phenomenon on hyperelliptic components}
\label{sect:case}

In this section we prove Theorem~\ref{theo:main} working directly on strata of meromorphic quadratic differentials on $\CP$. The conclusion on the hyperelliptic components can then be deduced using Lemma~\ref{lemm:c=2c}.

Thus, let $d\geq 1$ and fix $Y\in\Q(d,-1^{d+4})$, so that its orientation double cover $X$ belongs to the hyperelliptic component $\tilde\Q(d,-1^{d+4})$ (see \S\ref{sect:hyperelliptic}).

We want to study the Siegel--Veech constant associated to the counting of cylinders in $X$ which pass through two fixed regular Weierstrass points, say $w_1,w_2\in X$, as in Figure~\ref{figu:pocket-cylinder}~(left). These correspond to the preimages of two poles, say $p_1,p_2\in Y$ (see \S\ref{sect:Weierstrass}). Thus, the problem is reduced to study the Siegel--Veech constant associated to the counting of cylinders in $Y$ with a saddle connection joining $p_1$ and $p_2$ on one of its boundaries as in Figure~\ref{figu:pocket-cylinder}~(right).

\begin{rema}
\label{rema:every-cylinder}
It is worth to mention that since $Y$ has only one zero, for every cylinder on $Y$, one of its boundary components contains only poles and therefore, it is necessarily a saddle connection joining two different poles.
In particular, this means that in hyperelliptic components, every cylinder pass through exactly two regular Weierstrass points, which are contained in its core curve.
\end{rema}

\begin{figure}[ht]
\centering
\includegraphics[width=.9\textwidth,height=.25\textheight,keepaspectratio]{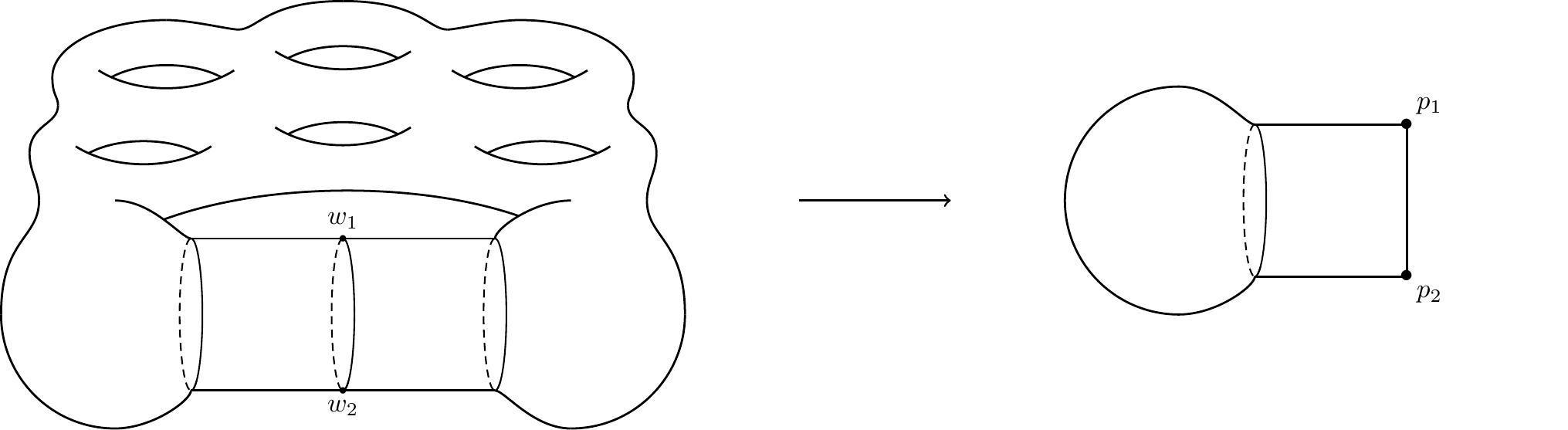}
\caption{A cylinder on $X$ whose core curve passes through two marked regular Weirestrass points (left) corresponds to cylinders on $Y$ for which one of its boundary components is formed by a saddle connection joining the two corresponding poles (right).}
\label{figu:pocket-cylinder}
\end{figure}

From the previous discussion, it follows that Theorem~\ref{theo:main} is equivalent to the following.
\begin{theo} \label{theo:main-sphere} 
Let $Y\in\Q(d,-1^{d+4})$. Then, the (area) Siegel--Veech constant on $\overline{\pslr Y}$ associated to the counting problem of cylinders with one boundary component being a saddle connection joining two marked poles, does not depend on $Y$ and is equal to
$\displaystyle\frac{1}{2\pi^2}\cdot\frac{1}{d+2}$.
\end{theo}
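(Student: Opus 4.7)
By Remark~\ref{rema:every-cylinder}, every cylinder $C$ on $Y$ has one boundary component consisting of a single saddle connection between two distinct poles $\{q, q'\}$, and the short-boundary loop of $C$ bounds a disk $D_0\subset\CP$ whose only singularities are exactly $\{q, q'\}$. Classifying cylinders by this short-boundary pair yields
\[c(Y) = \sum_{\{q, q'\}} c_{\{q,q'\}}(Y),\]
where the sum runs over unordered pairs of poles of $Y$; the quantity to be computed is the single term $c_{\{p_1, p_2\}}(Y)$.

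The plan is to construct a small family of auxiliary double covers $\pi_R\colon\tilde Y_R\to Y$ of $\CP$, indexed by an unordered pair $R$ of singularities of $Y$ chosen as the ramification locus, and to use their total Siegel--Veech constants to pin down $c_{\{p_1,p_2\}}(Y)$. By Riemann--Hurwitz each $\tilde Y_R$ is again a sphere; pulling back the quadratic differential, its stratum is read off from the rules that a ramified pole becomes a regular point, a ramified zero of order $d$ becomes a zero of order $2d+2$, and every unramified singularity yields two preimages of the same order. Each $\tilde Y_R$ therefore sits in a hyperelliptic locus on the sphere, and Theorem~\ref{theo:EKZ} produces an explicit non-varying value for $c(\tilde Y_R)$.

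The heart of the argument is a lifting analysis. For a cylinder $C$ on $Y$ in configuration class $\{q, q'\}$, the monodromy of its core curve along $\pi_R$ equals $(-1)^{|R\cap D_0|} = (-1)^{|R\cap\{q,q'\}|}$; if this monodromy is trivial, $C$ lifts to two disjoint isometric copies of itself, and if nontrivial, to a single cylinder of doubled length and area. Combined with $\area(\tilde Y_R) = 2\area(Y)$, a direct comparison of the weighted counting functions yields
\[c(\tilde Y_R) = \sum_{\{q,q'\}} a_R(\{q,q'\})\, c_{\{q,q'\}}(Y), \qquad a_R(\{q,q'\}) = \begin{cases} 1, & |R\cap\{q,q'\}| \text{ even,}\\ 1/4, & |R\cap\{q,q'\}| \text{ odd.}\end{cases}\]

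Finally I would select three ramification pairs $R$ adapted to $\{p_1,p_2\}$, one of them being $\{p_1,p_2\}$ itself and the other two involving the zero of $Y$ so that the three covers lie in distinct, explicitly identified strata and the parity coefficients $a_R$ organize the unknowns $c_{\{q,q'\}}(Y)$ into a small number of linear combinations compatible with the decomposition of $c(Y)$. Solving the resulting linear system, with right-hand sides supplied by Theorem~\ref{theo:EKZ} applied to $Y$ and to the three covers, should isolate $c_{\{p_1,p_2\}}(Y) = \frac{1}{2\pi^2}\cdot\frac{1}{d+2}$, which is then automatically independent of $Y$ since all inputs are non-varying. The main obstacle is this combinatorial bookkeeping: identifying the three strata precisely, choosing the three ramification pairs so that the reduced system closes on a tractable number of aggregated unknowns, and carrying out the algebraic simplification that collapses the mess of $(d+4)(d+3)/2$ configuration constants into the clean value claimed.
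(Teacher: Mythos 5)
Your overall strategy is the paper's: decompose $c(Y)$ over classes of cylinders according to which poles lie on the pole-only boundary component, build double covers ramified at $\{z,p_1\}$, $\{z,p_2\}$ and $\{p_1,p_2\}$, and invert a linear system whose right-hand sides are the non-varying constants supplied by Theorem~\ref{theo:EKZ}. However, there is a concrete error in your lifting coefficients, and it is fatal. Siegel--Veech constants, including those produced by Theorem~\ref{theo:EKZ} for the covers, are defined on area-normalized surfaces; since $\area(\tilde Y_R)=2\area(Y)$, lengths on the cover must be rescaled by $1/\sqrt2$ before comparing counting functions. Doing so turns your coefficients $(1,1/4)$ into $(2,1/2)$, which is exactly the statement of Lemma~\ref{lemm:relation}. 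The discrepancy is not a harmless global factor: your first equation $c(Y)=\sum c_{\{q,q'\}}$ correctly carries coefficient $1$, while each cover equation is understated by a factor of $2$, so the system you would solve differs genuinely from the correct one. Aggregating your unknowns into the four profile classes and solving with your coefficients yields
\[ c_{11}=\tfrac13\bigl[2c(Y)-2c(Y_{01})-2c(Y_{10})+2c(Y_{11})\bigr]=-\frac{(d+1)(d+6)}{6\pi^2(d+2)}<0, \]
rather than $\dfrac{1}{2\pi^2}\cdot\dfrac{1}{d+2}$.

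Second, the step you defer as ``the main obstacle'' --- collapsing the $\binom{d+4}{2}$ pair-constants into a tractable system --- is precisely the content of the paper's argument, and your proposal does not supply it. The resolution is to group cylinders only by the profile $\bigl(\ind_{\P(C)}(p_1),\ind_{\P(C)}(p_2)\bigr)\in\{0,1\}^2$, which is the exact invariant determining the monodromy for all three covers simultaneously; this leaves four unknowns $c_{00},c_{01},c_{10},c_{11}$ and, together with the decomposition of $c(Y)$ itself, a $4\times4$ invertible system giving $c_{11}=\frac13\bigl[2c(Y)-c(Y_{01})-c(Y_{10})+c(Y_{11})\bigr]$. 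You should make this aggregation explicit (the three covers plus $Y$ give exactly four equations, so no further choices are needed), record the stratum identifications $Y_{01},Y_{10}\in\Q(2d+2,-1^{2d+6})$ and $Y_{11}\in\Q(d,d,-1^{2d+4})$, and evaluate Theorem~\ref{theo:EKZ} on each. With the corrected coefficients $(2,1/2)$ everything then closes to the claimed value; as written, the proposal does not.
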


The rest of this section is devoted to prove Theorem~\ref{theo:main-sphere}, which in turns implies Theorem~\ref{theo:main}

\subsection{Coverings} \label{sect:hyperelliptic coverings}
Let $p_1,p_2\in Y$ be two poles and $z$ the degree~$d$ zero of the meromorphic quadratic differential defined by $Y$.
Define the following coverings of $Y$:
\begin{enumerate}
\item $Y_{01}$ is the double cover of $Y$ ramified at $z$ and $p_1$.
\item $Y_{10}$ is the double cover of $Y$ ramified at $z$ and $p_2$.
\item $Y_{11}$ is the double cover of $Y$ ramified at $p_1$ and $p_2$.
\end{enumerate}
By Riemann--Hurwitz formula, these are genus zero. Moreover, pulling back the meromorphic quadratic differential on $Y$ to these covers, it is easy to see that $Y_{01},Y_{10}\in \Q(2d+2,-1^{2(d+3)})$ and $Y_{11}\in\Q(d,d,-1^{2(d+2)})$.

\subsection{Configurations and Siegel--Veech constants} \label{sect:configurations}

We are concerned with the counting of cylinders bounded by a saddle connection joining the two fixed poles $p_1,p_2\in Y$.
Recall that since $Y$ has only one zero, for every cylinder on $Y$, one of its boundary components is necessarily a saddle connection joining two poles (see Remark~\ref{rema:every-cylinder}).
Thus, for a cylinder $C$ in $Y$, we denote by $\P(C)$ the set of these two poles.
Moreover, we define the profile of $C$ to be the couple $(\ind_{\P(C)}(p_1),\ind_{\P(C)}(p_2))\in\{0,1\}^2$. We also consider $\C_{pq}$ to be the configuration of cylinders in $Y$ of profile $(p,q)\in\{0,1\}^2$.

Then, all is reduced to the study of the Siegel--Veech constant $c (Y,\C_{11})$ associated to the configuration $\C_{11}$ on $Y$.
Denote by $c_{pq} = c (Y,\C_{pq})$. It is clear that \[c (Y) = c_{00} + c_{10} + c_{01} + c_{11}.\]

Now, consider the configuration $\C^{ij}_{pq}$ of cylinders $C$ in $Y_{ij}$ such that the projection to $Y$ is in $\C_{pq}$. Again, it is clear that $c (Y_{ij})$ decomposes into the sum of the Siegel--Veech constants $c (Y_{ij},\C^{ij}_{pq})$, $(p,q)\in\{0,1\}^2$.

The following general result relates the Siegel--Veech constants of configurations of cylinders on a double covering to the constant on the base space (see \cite[Lemma~4.1]{DZ}, which is a slight generalization of \cite[Lemma~1.1]{EKZ}; cf. Lemma~\ref{lemm:c=2c}).

\begin{lemm} \label{lemm:relation}
Let $\hat Z\to Z$ be any double covering of translation surfaces, $\C$ be a configuration of cylinders on $Z$ and $\hat\C$ be the lift of this configuration to $\hat Z$.
If the core curve of every cylinder in $\C$ has non-trivial monodromy, then $c (\hat Z,\hat \C) = c (Z,\C)/2$. 
If the core curve of every cylinder in $\C$ has trivial monodromy, then $c (\hat Z,\hat \C) = 2c (Z,\C)$.
\end{lemm}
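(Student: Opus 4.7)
The plan is to exploit the fact that the double covering $\pi\colon \hat Z \to Z$ is a local isometry, together with the fact that the Siegel--Veech formula is an integral identity that behaves well under pull-back.

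First, I would carry out a case analysis of how a single cylinder $C \subset Z$ in the configuration $\C$ lifts to $\hat Z$, depending on the monodromy of its core curve $\gamma_C$. If this monodromy is non-trivial, then $\gamma_C$ lifts to a single connected closed geodesic of twice the length, so $\pi^{-1}(C)$ is a single cylinder $\hat C$ with $\ell(\hat C) = 2\,\ell(C)$ and $\area(\hat C) = 2\,\area(C)$. If the monodromy is trivial, then $\gamma_C$ lifts to two disjoint copies, and $\pi^{-1}(C)$ is a disjoint union of two cylinders $\hat C_1, \hat C_2$, each mapping isometrically to $C$, with $\ell(\hat C_i) = \ell(C)$ and $\area(\hat C_i) = \area(C)$. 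In both cases the total area of $\pi^{-1}(C)$ is $2\,\area(C)$, consistent with $\area(\hat Z) = 2\,\area(Z)$.

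Next, I would rescale $\hat Z$ by $1/\sqrt{2}$ so that both $\hat Z$ and $Z$ have area one (the setting in which the Siegel--Veech formula is stated). Under this rescaling the lifted cylinders acquire a further factor of $1/\sqrt 2$ in length and $1/2$ in area, so plugging these into the normalized counting function gives
\[
N(\hat Z, R, \hat\C) = \begin{cases} N(Z, R/\sqrt 2, \C), & \text{non-trivial monodromy,} \\ N(Z, R\sqrt 2, \C), & \text{trivial monodromy.} \end{cases}
\]
Integrating against the affine invariant measure on $\overline{\slr \hat Z}$ and using the Siegel--Veech identity $\int N(Y, L, \C)\,d\nu(Y) = \pi L^2 \, c(Z, \C)$ on $\overline{\slr Z}$ then gives, respectively, $c(\hat Z, \hat\C) = c(Z,\C)/2$ and $c(\hat Z, \hat\C) = 2\,c(Z,\C)$.

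The main obstacle is the comparison of the two measures: $c(\hat Z, \hat\C)$ is defined by integrating over $\overline{\slr \hat Z}$, whereas $c(Z, \C)$ uses the measure on $\overline{\slr Z}$. Since the covering construction is algebraic and $\slr$-equivariant, the deformation of $\hat Z \to Z$ along the $\slr$-action produces a coherent family of double covers over the orbit closure, inducing a finite cover of affine invariant manifolds $\overline{\slr \hat Z} \to \overline{\slr Z}$ under which the affine invariant measures correspond up to a positive multiplicative constant. Once this identification is in place, the change of variable in the Siegel--Veech integral is immediate, and the rest of the proof is bookkeeping of the $\sqrt{2}$-factors coming from the area normalization.
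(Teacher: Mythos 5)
The paper does not actually prove this lemma; it is quoted from \cite[Lemma~4.1]{DZ}, itself a mild generalization of \cite[Lemma~1.1]{EKZ}, so there is no internal proof to compare against. Your argument reconstructs the standard proof behind those references, and its main lines are correct: the dichotomy for the preimage of a cylinder (one cylinder of doubled circumference and doubled area versus two isometric copies), the observation that in either case the normalized weight $\area(\pi^{-1}(C))/\area(\hat Z)$ equals $\area(C)/\area(Z)$, and the $\sqrt{2}$ bookkeeping after renormalizing to area one, which the quadratic dependence on $R$ in the Siegel--Veech formula converts into the factors $1/2$ and $2$. The step you rightly flag as the main obstacle is also the one you treat most loosely: you need the covering construction to extend to an $\slr$-equivariant map $\overline{\slr\hat Z}\to\overline{\slr Z}$ under which $\nu_{\overline{\slr\hat Z}}$ pushes forward exactly to $\nu_{\overline{\slr Z}}$ --- not merely ``up to a positive multiplicative constant,'' since both are probability measures; the identification follows because the pushforward is an ergodic $\slr$-invariant probability measure of full support in $\overline{\slr Z}$, hence affine by Eskin--Mirzakhani--Mohammadi and therefore the affine measure of that orbit closure. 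One further point deserves a sentence in a complete write-up: the lift of a maximal cylinder need not be maximal in $\hat Z$ when a boundary singularity is a ramification point that becomes regular upstairs; this is harmless because $\hat\C$ is by definition the collection of lifted cylinders and the counting function is evaluated on that collection, but it should be made explicit.
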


Note that, in our case, the monodromy of a cylinder $C$ in $Y$ for the covering $Y_{ij}$ can be easily deduced in terms of the number $r_{ij}(C)$ of ramification points in $\P(C)$. In fact, by topological considerations, one can easily show that $C$ has trivial monodromy if and only if $r_{ij}(C) \in\{0,2\}$ (see e.g. \cite[Lemma~6.2]{Pa}), as in Figure~\ref{figu:cover-monodromy}.

\begin{figure}[ht!]\centering
\hfill
\begin{subfigure}[t]{.3\textwidth}
\centering
\smash{
\llap{
\smash{
\llap{\raisebox{-1\baselineskip}{$Y$}}
\llap{\raisebox{-6.5\baselineskip}{$Y_{ij}$}}
}}
\raisebox{-\height}{
\makebox[.9\textwidth]{
\includegraphics[scale=.75]{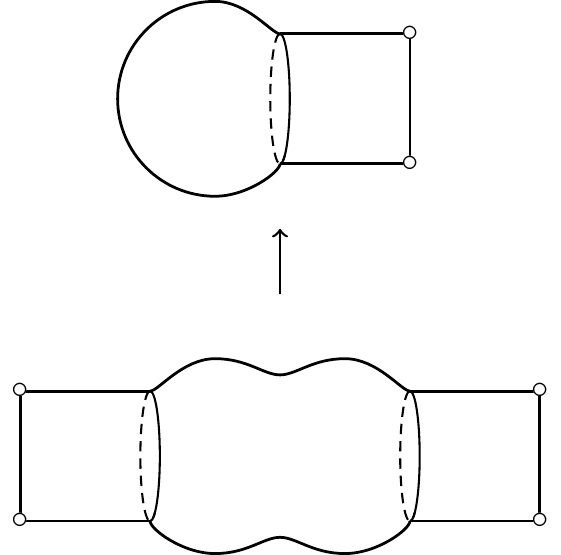}
}}}
\rule{0pt}{.23\textheight}
  \caption{$r_{ij}=0$.}
  \label{sfig:r0}
\end{subfigure}
\hfill
\begin{subfigure}[t]{.3\textwidth}
\centering
\smash{
\raisebox{-\height}{
\makebox[.9\textwidth]{
\includegraphics[scale=.75]{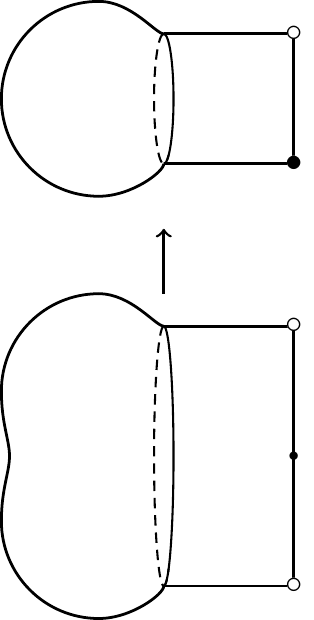}
}}}
\rule{0pt}{.23\textheight}
  \caption{$r_{ij}=1$.}
  \label{sfig:r1}
  \end{subfigure}
\hfill
\begin{subfigure}[t]{.3\textwidth}
\centering
\smash{
\raisebox{-\height}{
\makebox[.9\textwidth]{
\includegraphics[scale=.75]{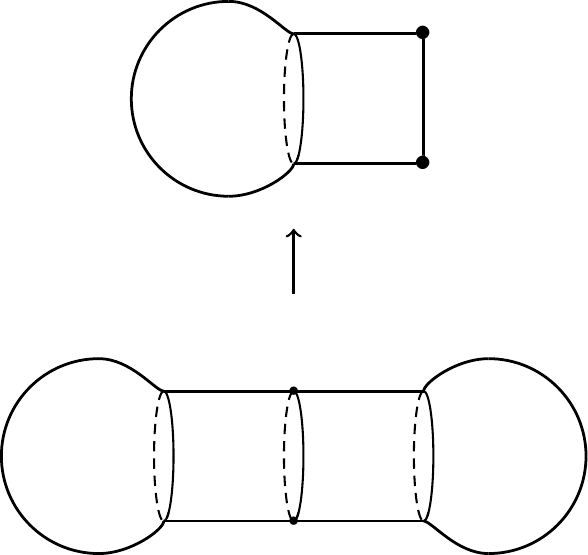}
}}}
\rule{0pt}{.23\textheight}
  \caption{$r_{ij}=2$.}
  \label{sfig:r2}
\end{subfigure}
\hfill
\caption{Possible liftings of a cylinder $C$ in $Y$ to $Y_{ij}$ depending on the number $r_{ij}$ of ramification points on its boundary. Ramification points are marked $\bullet$ and unramified, $\circ$.}
\label{figu:cover-monodromy}
\end{figure}

In other words, cylinders have monodromy $r_{ij}\mod2$.
It is clear that for cylinders in $\C^{ij}_{pq}$ we have $r_{ij}=ip+jq$, and thus, monodromy equals to $ip+jq\mod 2$. Using this, by the previous lemma, we have the following:
\[
\begin{array}{ r @{{}={}} r @{{}+{}} r @{{}+{}} r @{{}+{}} r }\label{eqsys}
c (Y) & c_{00} & c_{01} & c_{10} & c_{11}, \\[1em]
c (Y_{01}) & 2c_{00} & 2c_{01} & \dfrac{1}{2}c_{10} & \dfrac{1}{2}c_{11}, \\[1em]
c (Y_{10}) & 2c_{00} & \dfrac{1}{2}c_{01} & 2c_{10} & \dfrac{1}{2}c_{11}, \\[1em]
c (Y_{11}) & 2c_{00} & \dfrac{1}{2}c_{01} & \dfrac{1}{2}c_{10} & 2c_{11}.
\end{array}
\]

Moreover
\[
\begin{pmatrix}
1 & 1 & 1 & 1 \\
2 & 2 & 1/2 & 1/2 \\
2 & 1/2 & 2 & 1/2 \\
2 & 1/2 & 1/2 & 2
\end{pmatrix}^{-1}
= \; \frac{1}{3}
\begin{pmatrix*}[r]
-3 & 1 & 1 & 1 \\
2 & 1 & -1 & -1 \\
2 & -1 & 1 & -1 \\
2 & -1 & -1 & 1
\end{pmatrix*}
\]
and therefore, 
\begin{equation} \label{equa:c11}
c_{11} = \frac{1}{3}\left[2c (Y) - c (Y_{01}) - c (Y_{10}) + c (Y_{11})\right].
\end{equation}

This already gives the non-varying phenomenon for $c_{11}$. In fact, all the involved Siegel--Veech constants in the right-hand side of equation~\eqref{equa:c11} are non-varying, since all the involved surfaces are genus zero (see~\S\ref{sect:non-varying}).

To conclude the proof, it is enough to compute the Siegel--Veech constants for each surface.
This can be done by applying formula~\ref{equa:EKZ} in Theorem~\ref{theo:EKZ}. Recall that $Y\in\Q(d,-1^{d+4})$, $Y_{01},Y_{10}\in \Q(2d+2,-1^{2d+6})$ and $Y_{11}\in\Q(d,d,-1^{2d+4})$. Thus,

\[c (Y) = -\frac{1}{8\pi^2}\left(d\frac{d+4}{d+2} - 3\cdot(d+4)\right)=\frac{d+3}{4\pi^2}\cdot\frac{d+4}{d+2},\]
\[c (Y_{10}) = c (Y_{01}) = -\frac{1}{8\pi^2}\left((2d+2)\frac{2d+6}{2d+4} - 3\cdot(2d+6)\right)=\frac{d+3}{4\pi^2}\cdot\frac{2d+5}{d+2}\]
and
\[c (Y_{11}) = -\frac{1}{8\pi^2}\left(d\frac{d+4}{d+2}\cdot2 - 3\cdot(2d+4)\right)=\frac{1}{2\pi^2}\cdot\frac{d^2+4d+6}{d+2}.\]

Finally, plugging this in formula~\eqref{equa:c11}, we obtain
\[c_{11} = \frac{1}{3\pi^2}\cdot\frac{1}{d+2}\left[(d+3)(d+4)-(d+3)(2d+5)+(d^2+4d+6)\right]=\frac{1}{2\pi^2}\cdot\frac{1}{d+2}.\]
That is, the Siegel--Veech constant associated to the counting of cylinders on $Y$ for which one of its boundaries is a saddle connection joining the poles $p_1$ and $p_2$ equals $\displaystyle\frac{1}{2\pi^2}\cdot\frac{1}{d+2}$.
Proving Theorem~\ref{theo:main-sphere}.
\qed

Theorem~\ref{theo:main} follows then directly by lifting this to the orientation double cover $X$ of $Y$.
By Lemma~\ref{lemm:c=2c}, we get that the Siegel--Veech constant associated to the counting of cylinders on $X$ whose core curve passes through the two regular Weierstrass points $w_1$ and $w_2$ equals 
\[\frac{1}{\pi^2}\cdot\frac{1}{d+2} = 
\begin{dcases}
\frac{1}{\pi^2}\cdot\frac{1}{2g-1}, & \text{ if } X\in\hyp(2g-2), \\
\frac{1}{\pi^2}\cdot\frac{1}{2g}, & \text{ if }  X\in\hyp(g-1,g-1).
\end{dcases}
\]
\qed

\section{Counterexamples}
\label{sect:counterexample}

In this section we present two families of counterexamples: we exhibit hyperelliptic surfaces where the Siegel--Veech constant associated to the counting of cylinders whose core curve passes through two marked Weierstrass points does not coincide with the corresponding Siegel--Veech constant on the hyperelliptic loci where they lie.

For this, we follow the same strategy of the previous section, showing this on strata of meromorphic quadratic differentials on $\CP$.

\subsection{Different values in the same orbit closure}

Let $Y\in\Q(d,-1^{d+4})$ and consider the surface $Y_{11}$ from \S\ref{sect:hyperelliptic coverings}
, $Y_{11}\in\Q(d,d,-1^{2d+4})$.
In $Y_{11}$, cylinders joining poles fall into the configurations $\C_{00}^{11}$, $\C_{01}^{11}$, and $\C_{10}^{11}$ (see \S\ref{sect:configurations} for the precise definition) and every cylinder in these configurations joins two poles. Cylinders in $\C_{11}^{11}$ do not join poles (see Figure~\ref{sfig:r2}).

For a pole $p\in Y$, $p\neq p_1,p_2$, denote by $p',p''$ the corresponding preimages in $Y_{11}$, which are also poles.
Now, note that cylinders in $\C_{01}^{11}\cup\C_{10}^{11}$ are exactly those joining the pairs of poles in $Y_{11}$ of the form $p',p''$ (see Figure~\ref{sfig:r1}). Since there are $d+2$ such pairs, in average, the corresponding Siegel--Veech constant is
\[
\frac{1}{d+2}c \left(Y_{11},C_{01}^{11}\cup\C_{10}^{11}\right)
 = \frac{1}{d+2}\left( \frac{1}{2}c_{01} + \frac{1}{2}c_{10} \right) = \frac{1}{2\pi^2}\cdot\frac{1}{d+2},
\]
where the first equality holds by Lemma~\ref{lemm:relation} and the last one, by the equation system in the previous section, which gives $c_{01}=c_{10}=1/2\pi^2$.

On the other hand, we have that, in average, the Siegel--Veech constant associated to cylinders from $\C_{00}^{11}$ is 
\[
\frac{1}{2(d+2)(d+1)}c \left(Y_{11},\C_{00}^{11}\right) 
  = \frac{1}{2(d+2)(d+1)} 2c_{00} 
  = \frac{1}{4\pi^2}\cdot\frac{1}{d+2},
\]
where, again, the first equality holds by Lemma~\ref{lemm:relation} and the last one, by the equation system in the previous section, which gives $c_{00}=(d+1)/4\pi^2$. In addition, $2(d+2)(d+1) = \binom{2d+4}{2} - (d+2)$ is the number of possible pairs of poles joined by cylinders in $\C_{00}^{11}$.

Thus, we have shown that in $Y_{11}$ there are two families of pairs of poles such that, in average, the corresponding Siegel--Veech constants do not coincide, so that the non-varying phenomenon does not hold, not even inside the $\pslr$-orbit closure of $Y_{11}$.
\qed

Moreover, in the generic case, by a result of Athreya--Eskin--Zorich~\cite[Corollary~4.7 and Proposition~4.9]{AEZ}, we have that the Siegel--Veech constant associated to the counting of cylinders bounded by a saddle connection joining two marked poles on $\Q(d_1,\dots,d_k)$ equals $\displaystyle\frac{1}{2\pi^2}\cdot\frac{1}{k-3}$.
Thus, in $\Q(d,d,-1^{2d+4})$, the generic value is $\displaystyle\frac{1}{2\pi^2}\cdot\frac{1}{2d+3}$, which do not coincide with neither of the above averages.

It is worth to mention that, nevertheless, the whole average in $Y_{11}$, that is, the average of the Siegel--Veech constants for every possible pair of poles in $Y_{11}$ does coincide with the generic value. That is,
\[\frac{1}{\binom{2d+4}{2}} c \left(Y_{11},C_{00}^{11}\cup\C_{01}^{11}\cup\C_{10}^{11}\right)
  = \frac{1}{\binom{2d+4}{2}} \left(2c_{00} + \frac{1}{2} c_{01}+ \frac{1}{2} c_{10}\right)
  = \frac{1}{2\pi^2}\cdot\frac{1}{2d+3}\]

It is then a natural question whether the non-varying phenomenon holds at least in average in every hyperelliptic locus (recall that in hyperelliptic loci, different from hyperelliptic components, there are cylinders which do not pass through regular Weierstrass points).

\subsection{Some pairs of regular Weierstrass point are not even joinable by cylinders}

Let $Y\in\Q(d_1,\dots,d_k)$ with $\sum_{j=1}^{k} d_j = -4$ and such that at least two $d_j$ are not equal to $-1$, say $d_1,d_2\neq-1$. In particular, $\tilde\Q(d_1,\dots,d_k)$ is not a hyperelliptic component.
Let $z_1$ and $z_2$ be the zeros of degree $d_1$ and $d_2$ on $Y$, respectively, and consider $\hat Y$ the double cover of $Y$ ramified at $z_1$ and $z_2$. Again, by Riemann--Hurwitz formula, $\hat Y$ is genus zero and pulling back the meromorphic quadratic differential on $Y$ to $\hat Y$, we have that $\hat Y\in\Q(2d_1+2,2d_2+2,d_3,d_3,\dots,d_k,d_k)$.

Let $p\in Y$ be any pole. Since $\hat Y\to Y$ is not ramified over poles, $p$ has two preimages, say $p_1,p_2\in\hat Y$.

\begin{lemm}
There are no cylinders on $\hat Y$ with one boundary being a saddle connections joining $p_1$ and $p_2$.
\end{lemm}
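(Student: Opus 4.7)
The plan is to argue by contradiction, exploiting the deck involution $\sigma$ of the cover $\pi\colon\hat Y\to Y$. Recall that $\sigma$ swaps $p_1$ and $p_2$, its fixed set is exactly $\{z_1,z_2\}$, and (being locally of the form $z\mapsto-z$ in a uniformizing coordinate of $\hat q=\pi^{\ast}q$) it preserves the line directions of the horizontal and vertical foliations. Suppose, for contradiction, that a cylinder $\hat C\subset\hat Y$ has one boundary component equal to the saddle connection $\hat\gamma$ joining $p_1$ and $p_2$.

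The geometric core is the following \emph{cone-angle observation}: the simple poles $p_1,p_2$ each have cone angle~$\pi$, and in a cone of angle~$\pi$ the complement of any single ray is connected and of angular measure~$\pi$. Since the boundary component at issue is a single saddle connection, the only way for it to close up as a loop is that $\hat\gamma$ is traversed forward and back, reflecting through the full cone angle at each of $p_1,p_2$. Consequently, near $p_1$, the cylinder $\hat C$ coincides with the entire complement of $\hat\gamma$ in a small cone neighborhood and so occupies the full cone angle~$\pi$ at $p_1$ (and similarly at $p_2$). In particular, the \emph{other} boundary component of $\hat C$ cannot touch $p_1$ or $p_2$: there is no angular room left.

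Now apply $\sigma$. Since $\sigma$ preserves directions, $\sigma(\hat C)$ is a cylinder in the same foliation as $\hat C$, whose distinguished boundary is $\sigma(\hat\gamma)$, a (possibly different) saddle connection joining $p_1$ and $p_2$; the cone-angle observation applied to $\sigma(\hat C)$ shows it also occupies the full cone angle~$\pi$ at $p_1$. If $\sigma(\hat C)\neq\hat C$, then by maximality they are disjoint cylinders in the same foliation, yet each covers an open region of angular measure~$\pi$ in the cone of total angle~$\pi$ at~$p_1$, and two such regions must intersect — contradicting disjointness. If instead $\sigma(\hat C)=\hat C$, then $\sigma$ restricts to a non-trivial isometric involution of $\hat C$. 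Should $\sigma(\hat\gamma)=\hat\gamma$ as a set, then $\sigma$ reverses $\hat\gamma$ (since it swaps the endpoints) and fixes its midpoint, which is a regular interior point, contradicting that the fixed set of $\sigma$ is the singular set $\{z_1,z_2\}$. Otherwise $\sigma(\hat\gamma)\neq\hat\gamma$, and $\sigma(\hat\gamma)$ must lie on the other boundary component of $\hat C$ (the chosen boundary being exhausted by $\hat\gamma$); but that other boundary, by the cone-angle observation, cannot contain $p_1$, while $\sigma(\hat\gamma)$ ends at $\sigma(p_2)=p_1$ — contradiction.

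The main obstacle is justifying the cone-angle observation: one must correctly interpret what it means for a cylinder's boundary component to be a single saddle connection joining two simple poles, and then deploy the $\pi$-cone geometry to conclude that the cylinder absorbs the entire cone angle at each endpoint, thereby precluding any other boundary from touching those poles. Once this is in place, the deck involution does the rest.
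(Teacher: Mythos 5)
Your argument is correct, but it takes a genuinely different route from the paper's. The paper argues \emph{downstairs}: it projects the hypothetical cylinder and the saddle connection $\hat s$ to $Y$, where $\hat s$ becomes a saddle connection joining the simple pole $p$ \emph{to itself} (since $p_1$ and $p_2$ lie over the same pole $p$), and a geodesic loop based at a point of cone angle $\pi$ can never bound a cylinder: the cylinder side would have to occupy the full angle $\pi$, leaving angle $0$ on the other side, which forces the loop to retrace itself. Your proof stays upstairs and replaces this single observation by a case analysis on how the deck involution $\sigma$ interacts with the cylinder: disjointness of distinct parallel maximal cylinders against two full-$\pi$ sectors in the cone at $p_1$ in one case, the fixed regular midpoint of a $\sigma$-invariant saddle connection in another, and the absence of angular room on the second boundary component in the last. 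All three cases check out --- your ``cone-angle observation'' is exactly the standard pocket picture of Figure~\ref{figu:pocket-cylinder}, distinct maximal cylinders in a common direction are indeed disjoint, and $\sigma$ does permute the two ends of an invariant cylinder --- so the argument is complete, just longer. What the paper's projection buys is precisely the collapse of your case analysis: identifying $p_1$ with $p_2=\sigma(p_1)$ downstairs turns the two-endpoint saddle connection into a loop at a single simple pole, where the $\pi$-cone obstruction is immediate and no involution bookkeeping is needed. Your version has the minor compensating advantage of not having to justify that the image of a cylinder under the covering map is (contained in) a cylinder of $Y$ with $s$ on its boundary, a point the paper passes over quickly.
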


\begin{proof}
Let $\hat s$ be a saddle connection in $\hat Y$ joining $p_1$ and $p_2$, and let $s$ be its image in $Y$.
Suppose that there is a cylinder in $\hat Y$ with $\hat s$ as one of its boundary component. Then, its projection to $Y$ is also a cylinder which has $s$ as one of its boundary components. But $s$ is a saddle connection joining $p$ to itself. In particular, $s$ cannot be a boundary component of a cylinder.
Thus, no such cylinder can exist.
\end{proof}

In particular, the associated Siegel--Veech constant is zero.
But, in the generic case (by the above-mentioned result of Athreya--Eskin--Zorich~\cite{AEZ}), the corresponding Siegel--Veech constant is strictly possitive.
\qed


\end{document}